\documentclass[11pt,twoside]{article}
\usepackage{amsmath, amssymb, amsfonts, amstext, amsthm, textcomp, enumerate}
\usepackage{tikz}
\usepackage{comment}
\usepackage[mathscr]{euscript}
\usepackage{float}
\usepackage{booktabs}
\usepackage{mathtools}
\usepackage{tabularx}
\usepackage[left=20mm,top=0.5in,bottom=15mm]{geometry}
\usepackage{graphicx}
\usepackage{caption}
\usepackage{epstopdf}
\usepackage{longtable}
\usepackage[utf8]{inputenc}
\usepackage{color}
\usepackage{hyperref}
\usepackage{graphicx}
\usepackage{dcolumn}
\usepackage{bm}
\usepackage{epstopdf}
\usepackage[english]{babel}
\usepackage{subfigure}
\usepackage{color}

\usepackage{ulem}
\newtheorem{thm}{Theorem}[section]
\newtheorem{lem}[thm]{Lemma}
\newtheorem{cor}[thm]{Corollary}

\newtheorem{rem}[thm]{Remark}
\newtheorem{ex}[thm]{Example}

\newcommand{\mnr}{\mathbf{M}_n\,(\mathbb{R})}

\newfont{\bb}{msbm10}

\title{Graphs of Moore-Penrose inverse of matrices possessing the treeangle property}
\author{A. M. Encinas
\thanks{Department de Mathem\`atiques, Universitat Polit\`ecnica de Catalunya, 08034 Barcelona, Spain\\ (andres.marcos.encinas@upc.edu).}  $\cdot$ 
K. Kranthi Priya \thanks{Department of Mathematics, Indian Institute of 
		Technology Madras,
		Chennai 600036, India (ma21d010@smail.iitm.ac.in, kcskumar@iitm.ac.in).}  $\cdot$  K. C. Sivakumar $^\dagger$ }
\begin{document}
\maketitle

\begin{abstract}
It is known that the inverse of an invertible real square matrix satisfying the treeangle property, is a treediagonal matrix. A converse statement also holds. 
We show that the verbatim analogues are not true for the Moore-Penrose inverse, and obtain the precise structure of graphs corresponding to the Moore-Penrose inverse
of matrices possessing the treeangle property.

\end{abstract}

\vskip.25in
\textit{Keywords:} Treeangle property, treediagonal matrix, 
Moore-Penrose inverse.\\

\textit{AMS Subject classifications:} 15A03, 15A09. 

\newpage
\section{Introduction} 

Let $\Gamma$ be a graph with the vertex set $V(\Gamma)=V:=\{v_1,v_2,\ldots, v_n\}$, where, $v_i$'s are distinct from each other, and the edge set  
\[
E(\Gamma) = \{\{v_i, v_j\} :i \neq j,~ v_i \sim v_j\},
\]
where the notation $v_i \sim v_j$ indicates that the vertices $v_i$ and $v_j$ are {\it adjacent} in $\Gamma$, i.e., there is an edge between $v_i$ and $v_j$. Therefore, we are assuming that $\Gamma$ is an (undirected) {\it simple} graph, {\it i.e.} has no multiple edges between two given vertices and has no loops around any vertex. For a given vertex $v$, let $N(v),$ the {\it neighbourhood of $v$}, denote the set of vertices which are adjacent to $v$, and $d(v):=|N(v)|$ (the cardinality) denote the {\it degree of $v.$} If $d(v)=1$ then $v$ is said to be a {\it pendant} vertex, while $v$ is called an {\it interior} vertex, if $d(v)>1$.

 A {\it path of length $m$} in $\Gamma$  is a sequence $v_{i_1},v_{i_2},\ldots,v_{i_m},v_{i_{m+1}}$ of vertices, distinct from each other, such that $v_{i_j}\sim v_{i_{j+1}}$ for all $j=1,\ldots,m$. A {\it cycle of length $m$} in $\Gamma$  is a sequence $v_{i_1},v_{i_2},\ldots,v_{i_{m-1}},v_{i_m}$ of vertices, distinct from each other, such that $v_{i_j}\sim v_{i_{j+1}}$ for all $j=1,\ldots,m-1$ and $v_{i_m}\sim v_{i_1}$. 
 
If $\Gamma$ is connected and acyclic, then $\Gamma$ is a {\it tree}. Therefore, a graph is a tree if and only if there exists a unique path between any two vertices $u, v \in V$. It is well-known that any tree has at least two pendant vertices.

Let us recall the operation of deleting a vertex from a graph $\Gamma$. Removal of a vertex $v$ means that we remove $v$ from the vertex set $V$ and all edges that are incident to $v$ from  $\Gamma.$ The resultant graph is denoted by $\Gamma \setminus \{v\}$. Similarly, the operation  of deleting an edge $e=\{u,v\}$,  means simply removing the edge $e$ from the graph $\Gamma$. The resultant graph is denoted by $\Gamma \setminus \{e\}.$

Let $\mathbf{M}_{m\times n}(\mathbb{R})$ denote the set of all real matrices of size $m\times n$. When $m=n,$ we simply write $\mnr.$ 

For a given tree $\Gamma$, we define a matrix $A=(a_{ij}) \in \mnr$ to be {\it treediagonal} (more precisely $\Gamma$-treediagonal) if $a_{ij}=a_{ji}=0$ for $i\neq j$ and $\{v_i,v_j\} \notin E.$  Additionally, when $\{v_i,v_j\}\in E$, we assume that at least one of $a_{ij}$ or $a_{ji}$ is nonzero. 
 Recall that $A\in  \mnr$ is said to be {\it reducible} if there is a permutation matrix $P\in  \mnr$ such that $P^TAP$ has the form $$\begin{pmatrix}
       B_{11} & B_{12}\\
       0 & B_{22}
   \end{pmatrix}$$ for some square matrices $B_{11}$ and $B_{22}$ of order at least one. A matrix is {\it irreducible} if it is not reducible.
Thus, a $\Gamma$-treediagonal matrix may be reducible. Observe that, if $P_n$ denotes the path on $n$ vertices, then any tridiagonal matrix $A=(a_{ij})$ of order $n$ is a $P_n$-treediagonal matrix if $|a_{i,i+1}|+|a_{i+1,i}|>0$ for $i=1,\ldots,n-1$.

For a given matrix $A=(a_{ij}) \in \mnr$, we define the graph $G:=G(A)$ calling it the {\it graph associated with $A$}, on the vertex set $\{v_1,v_2,\ldots,v_n\}$ and edge set $E(G).$ Then $\{v_i,v_j\}\in E(G)$ if and only if at least one of $a_{ij}$ or $a_{ji}$ is nonzero. 

\begin{rem}\label{diagtree}
    If $A$ is a $\Gamma$-treediagonal matrix then $D_1AD_2$ is a $\Gamma$-treediagonal matrix for all invertible diagonal matrices $D_1,D_2.$

\end{rem}

For a given tree $\Gamma$, an $n\times n$ matrix $A=(a_{ij})$ is said to satisfy the {\it $\Gamma$-treeangle property} if for every triplet of vertices $v_i, v_j, v_k \in V$ such that $v_k$ lies in the path joining $v_i$ and $v_j$, the condition $a_{ij}a_{kk}=a_{ik}a_{kj},$ holds. Note that, this also implies that $a_{ji}a_{kk}=a_{jk}a_{ki}$, for such tuples. When the tree $\Gamma$ is clear from the context, we simply refer to this as {\it treeangle property.}

Let $A\in \mnr, n>2$, be such that the diagonal elements $a_{22},a_{33},\ldots,a_{(n-1)(n-1)}$ are nonzero. We say that $A$ has the {\it triangle property} if 
    $$a_{ij}=\dfrac{a_{ik}a_{kj}}{a_{kk}},$$
for all $i<k<j$ and $i>k>j$. It is easy to see that every matrix of order $n$ satisfying the triangle property also satisfies the $P_n$-treeangle property. Conversely, when $a_{ii}\not=0$, $i=1,\ldots,n-1$ satisfies the $P_n$-treeangle property, then it also satisfies the triangle property, and hence in this case both notions are equivalent.)

Define $d_{ij}=a_{ii}a_{jj}-a_{ij}a_{ji}.$ Observe that $d_{ii}=0$ and $d_{ij}=d_{ji}$.

\begin{rem}\label{psub}\cite[Lemma 2.5]{AKS24}
   Every principal submatrix of a matrix with the triangle property also has the triangle property.
\end{rem}

\begin{rem}\label{inv}\cite[Lemma 1]{B79}
    If $A$ has the triangle property and if $d_{i(i+1)}=0$ for some $i,~1\le i \le n-1$ then $A$ is singular.
\end{rem}

\begin{thm}\cite[Theorem 4]{kl82}\label{klein1}
     Let $A=(a_{ij}) \in \mnr$ be nonsingular and satisfy the treeangle property, with $a_{ii}\neq 0$ for interior vertices $v_i \in V.$ Then $d_{ij}\not=0$ when $\{v_i,v_j\}\in E$    and $A^{-1}=(b_{ij})$ is treediagonal. The entries of $A^{-1}$ are explicitly given by
    \begin{equation}\label{invfor}
       b_{ij}=\left\{\begin{array}{cl}
       \dfrac{-a_{ij}}{d_{ij}}, & \{v_i,v_j\}\in E,\\[3ex]
       \dfrac{a_{kk}}{d_{ik}}, & i=j, ~d(v_i)=1, ~\{v_i,v_k\}\in E,\\[3ex]
      \displaystyle  \bigg(1+\sum\limits_{v_k\in N(v_i)}\dfrac{a_{ik}a_{ki}}{d_{ik}}\bigg)\frac{1}{a_{ii}}, & i=j,~ d(v_i)\ge 2,\\
       0, & otherwise.
      \end{array}\right.
      \end{equation}
 \end{thm}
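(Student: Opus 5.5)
The plan is to verify directly that the matrix $B=(b_{ij})$ defined by \eqref{invfor} satisfies $AB=I$. Since $A$ is nonsingular, this forces $B=A^{-1}$, and $B$ is treediagonal by construction. Before that, I must show that $d_{ij}\neq 0$ for every edge, since otherwise the formula makes no sense.

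\emph{Step 1: $d_{ij}\neq 0$ on edges.} Suppose $\{v_i,v_j\}\in E$ and $d_{ij}=0$. Deleting this edge splits $\Gamma$ into two subtrees, $T_i\ni v_i$ and $T_j\ni v_j$. Take any $v_p\in T_i$ and $v_q\in T_j$. The path from $v_p$ to $v_q$ passes through $v_i$ and then $v_j$. The treeangle property gives $a_{pq}a_{ii}=a_{pi}a_{iq}$ and $a_{iq}a_{jj}=a_{ij}a_{jq}$, and symmetric identities hold for $a_{qp}$.
\begin{itemize}
\item If $v_i$ is interior, then $a_{ii}\ne0$.
\item If $v_i$ is pendant, then $T_i=\{v_i\}$, so there is nothing to propagate on that side.
\end{itemize}
In either case, $d_{ij}=0$ together with these identities shows that rows $i$ and $j$ of $A$ are proportional: row $i$ is $(a_{ij}/a_{jj})$ times row $j$, or the reverse, provided the relevant diagonal entry is nonzero. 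If $v_i$ and $v_j$ are both pendant, then $n=2$ and $d_{ij}=\det A\neq0$ immediately. This contradicts nonsingularity. Some care is needed in splitting cases according to which diagonal entries vanish; this is essentially Remark \ref{inv} transplanted to trees.

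\emph{Step 2: verify $(AB)_{ij}=\delta_{ij}$.} Fix a row $i$ of $A$ and a column $j$ of $B$. Column $j$ of $B$ is supported on $\{v_j\}\cup N(v_j)$, so
\[
(AB)_{ij}=a_{ij}b_{jj}-\sum_{v_k\in N(v_j)}a_{ik}\frac{a_{kj}}{d_{kj}}.
\]

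Case $i=j$ with $d(v_j)\ge2$: substituting $b_{jj}$ gives
\[
a_{jj}b_{jj}-\sum_k \frac{a_{jk}a_{kj}}{d_{jk}}=1
\]
at once.

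Case $i=j$ pendant with neighbour $v_k$: we get $(a_{jj}a_{kk}-a_{jk}a_{kj})/d_{jk}=1$.

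Case $i\neq j$: exactly one neighbour $v_m\in N(v_j)$ lies on the path from $v_j$ to $v_i$ (possibly $v_m=v_i$). Every other neighbour $v_k$ has $v_j$ on the path from $v_i$ to $v_k$, so $a_{ik}a_{jj}=a_{ij}a_{jk}$.
\begin{itemize}
\item If $v_j$ is interior, $a_{jj}\neq0$, so $a_{ik}=a_{ij}a_{jk}/a_{jj}$. The sum over $k\ne m$ then combines with $a_{ij}b_{jj}$. Using $a_{jk}a_{kj}/d_{jk}$, the quantity collapses to $\frac{a_{ij}}{a_{jj}}\bigl(1+\frac{a_{jm}a_{mj}}{d_{jm}}\bigr)-a_{im}\frac{a_{mj}}{d_{mj}}$.
\item For $v_m$ we have $v_m$ on the path from $v_i$ to $v_j$, so $a_{ij}a_{mm}=a_{im}a_{mj}$ (trivial if $m=i$). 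This gives $a_{im}a_{mj}=a_{ij}a_{mm}$, or we handle $m=i$ directly, and a short computation with $d_{jm}=a_{jj}a_{mm}-a_{jm}a_{mj}$ yields $0$.
\item If $v_j$ is pendant, its only neighbour is $v_m$, and $a_{ij}a_{mm}/d_{jm}-a_{im}a_{mj}/d_{jm}=0$ by the treeangle identity.
\end{itemize}

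The main obstacle is Step 1, particularly when some endpoint diagonal entries vanish. There I would argue via a rank-one/proportional-row (or column) argument, choosing row or column proportionality depending on which of $a_{ii}$ and $a_{jj}$ is nonzero. Step 2 is bookkeeping once the "unique neighbour towards $v_i$" observation is in place, with the subcase $m=i$ and the possibility $a_{mm}=0$ for a pendant $v_m$ handled separately.
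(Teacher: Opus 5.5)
The paper gives no proof of this statement. It is quoted from Klein's 1982 paper and used as a black box, for instance on the block $B$ in the proof of Theorem~\ref{AdagG}. So there is nothing in the paper to compare with. Judged on its own, your direct verification is a correct, self-contained proof.

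\textbf{Step 1.} The case analysis you flag does close. Suppose $a_{jj}\neq 0$.
\begin{itemize}
\item For $q\in T_j$, the path from $v_i$ to $v_q$ passes through $v_j$, so $a_{jj}a_{iq}=a_{ij}a_{jq}$.
\item For $q=i$ this identity is exactly $d_{ij}=0$.
\item For $q\in T_i\setminus\{v_i\}$, the vertex $v_i$ has a neighbour in $T_i$ as well as $v_j$, so it is interior and $a_{ii}\neq 0$. Then $a_{ii}a_{jq}=a_{ji}a_{iq}$ together with $a_{ij}a_{ji}=a_{ii}a_{jj}$ gives $a_{ii}(a_{ij}a_{jq}-a_{jj}a_{iq})=0$, and you may cancel $a_{ii}$.
\end{itemize}
Hence row $i$ equals $(a_{ij}/a_{jj})$ times row $j$. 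If $a_{ii}=a_{jj}=0$, both ends are pendant, so $n=2$ and $d_{ij}=\det A$.

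\textbf{Step 2.} The collapsed entry is $(a_{ij}a_{mm}-a_{im}a_{mj})/d_{jm}$. This involves no division by $a_{mm}$, so the case $a_{mm}=0$ needs no separate treatment. In any event $v_m$ is interior unless $m=i$.

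\textbf{A byproduct of your route.} Step~2 never uses invertibility of $A$. So, under the treeangle property with nonzero interior diagonal entries, your argument shows that $A$ is nonsingular if and only if $d_{ij}\neq 0$ on every edge. This is a tree version of Remark~\ref{inv} together with its converse.

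\textbf{A caveat on ``treediagonal by construction''.} The paper's definition of treediagonal also requires $a_{ij}$ or $a_{ji}$ to be nonzero on every edge. Your construction only yields the zero pattern. Since $b_{ij}=-a_{ij}/d_{ij}$ and $b_{ji}=-a_{ji}/d_{ij}$, the inverse $A^{-1}$ meets this extra condition exactly when $A$ does. That is not among the hypotheses: $A=I$ satisfies the treeangle property for every tree, yet $A^{-1}=I$ vanishes on all edges. The statement must therefore be read either with Klein's zero-pattern meaning or under the standing assumption of Remark~\ref{rnztreeangle}. This is a defect of the statement as transcribed, not of your argument.
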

 
 \begin{thm}\cite[Theorem 5]{kl82}\label{klein2}
     If $A$ is a nonsingular treediagonal matrix, then its inverse satisfies the treeangle property.
 \end{thm}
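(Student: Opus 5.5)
We prove that if $A=(b_{ij})$ is a nonsingular $\Gamma$-treediagonal matrix, then $C=A^{-1}=(c_{ij})$ satisfies the $\Gamma$-treeangle property. The plan is to express the entries of $C$ through the cofactor formula $c_{ij}=(-1)^{i+j}\det A(j|i)/\det A$ and to exploit the tree structure of $G(A)$.

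\textbf{Step 1 (off-diagonal cofactors).} For $i\neq j$ let $v_i=u_0,u_1,\ldots,u_m=v_j$ be the unique path in $\Gamma$. I would prove the path expansion
\[
\det A(j|i)=\pm\, b_{u_0u_1}b_{u_1u_2}\cdots b_{u_{m-1}u_m}\,\det A[V\setminus P],
\]
where $P=\{u_0,\ldots,u_m\}$ and $A[V\setminus P]$ is the principal submatrix on the remaining vertices. To see this, expand $\det A(j|i)$ as a sum over permutations $\sigma$ of $V$ with $\sigma(v_j)=v_i$ forced (the standard digraph expansion of a cofactor). Such a $\sigma$ decomposes into cycles, and the cycle through $v_j$ and $v_i$ must use only edges of $\Gamma$. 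Since $\Gamma$ is a tree, every cycle of length $\ge 3$ in the digraph of $A$ is impossible, so the only nontrivial cycles are 2-cycles and fixed points. The cycle containing the forced arc from $v_j$ to $v_i$ must therefore close up along the unique tree path from $v_i$ to $v_j$. The remaining vertices are permuted arbitrarily, which contributes exactly $\det A[V\setminus P]$. The sign is determined by the cycle length and can be checked to be uniform.

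\textbf{Step 2 (diagonal cofactors).} Similarly, $c_{kk}=\det A[V\setminus\{v_k\}]/\det A$.

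\textbf{Step 3 (the treeangle identity).} Let $v_k$ be an interior vertex of the path from $v_i$ to $v_j$. Then the path $P_{ij}$ is the union of $P_{ik}$ and $P_{kj}$, meeting in $v_k$. The edge products multiply exactly: the product along $P_{ij}$ equals the product along $P_{ik}$ times the product along $P_{kj}$. Hence the identity $c_{ij}c_{kk}=c_{ik}c_{kj}$ reduces to
\[
\det A[V\setminus P_{ij}]\,\det A[V\setminus\{v_k\}]=\det A[V\setminus P_{ik}]\,\det A[V\setminus P_{kj}].
\]
Deleting $v_k$ disconnects the tree into components; let $T_1$ be the component containing $v_i$, $T_2$ the one containing $v_j$, and $R$ the union of all others. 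Each matrix above is block diagonal along these components, so each determinant factors:
\begin{align*}
\det A[V\setminus P_{ij}] &= \det A[T_1\setminus P_{ik}]\,\det A[T_2\setminus P_{kj}]\,\det A[R],\\
\det A[V\setminus\{v_k\}] &= \det A[T_1]\,\det A[T_2]\,\det A[R],\\
\det A[V\setminus P_{ik}] &= \det A[T_1\setminus P_{ik}]\,\det A[T_2]\,\det A[R],\\
\det A[V\setminus P_{kj}] &= \det A[T_1]\,\det A[T_2\setminus P_{kj}]\,\det A[R].
\end{align*}
Both sides of the required identity are then the same product of factors, which proves it. The signs must also be tracked: the sign for a path of $m$ edges is $(-1)^m$ after combining $(-1)^{i+j}$ with the permutation sign, and $(-1)^{m_1+m_2}=(-1)^{m_1}(-1)^{m_2}$, so the signs agree.

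The cases where $k=i$ or $k=j$ are trivial. Since $C$ is a general (not necessarily symmetric) matrix, the identity $c_{ji}c_{kk}=c_{jk}c_{ki}$ follows by the same argument with the orientation of the path reversed.

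The main obstacle is Step 1: rigorously establishing the path expansion of the off-diagonal cofactor, together with its sign, for a matrix whose graph is a tree.
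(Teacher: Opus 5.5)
The paper gives no proof of this statement. It is quoted from Klein \cite{kl82} and used as a black box, so there is no in-paper route to compare against. Your argument is correct and self-contained. Compared with citing Klein, it yields the explicit formula $c_{ij}=(-1)^m\pi_{ij}\det A[V\setminus P_{ij}]/\det A$, where $\pi_{ij}=b_{u_0u_1}\cdots b_{u_{m-1}u_m}$ is the path product. It also uses only the zero pattern of $A$, not the requirement that $b_{ij}$ or $b_{ji}$ be nonzero on each edge, so it proves slightly more than stated.

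Step 1, which you flag as the main obstacle, is routine.
\begin{itemize}
\item We have $(-1)^{i+j}\det A(j|i)=\partial\det A/\partial b_{ji}$, which is the signed sum over permutations $\sigma$ with $\sigma(v_j)=v_i$.
\item The cycle of $\sigma$ through $v_j$ is $v_i=u_0\to u_1\to\cdots\to u_m=v_j\to v_i$. It has length $m+1$ and sign $(-1)^m$, which gives your uniform sign.
\item Its contribution vanishes unless every $\{u_t,u_{t+1}\}$ is an edge of $\Gamma$. Since the $u_t$ are distinct, this forces the unique tree path.
\item The remaining cycles range over all permutations of $V\setminus P$ and produce $\det A[V\setminus P]$.
\end{itemize}

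Your sentence that cycles of length $\ge 3$ are impossible is not the right justification here. The forced cycle can have any length $m+1$, because the arc $v_j\to v_i$ is not a tree arc. What is actually used is that the rest of that cycle is a path through distinct vertices along tree edges.

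Step 3 is right as written. It relies on $P_{ik}\setminus\{v_k\}\subseteq T_1$, $P_{kj}\setminus\{v_k\}\subseteq T_2$, and the absence of nonzero entries between distinct components of $\Gamma\setminus\{v_k\}$. Your ``reduction'' is only a sufficient condition, which is harmless: since $\pi_{ij}=\pi_{ik}\pi_{kj}$, both sides of the treeangle identity vanish when $\pi_{ij}=0$.
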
    

First, we show that verbatim statements of Theorems \ref{klein1} and \ref{klein2} are false for the Moore-Penrose inverse. Let us first present a brief review of this notion.
For a given $A \in \mnr$, let $R(A), N(A)$ and $A^T$ denote the range space, the null space and the transpose of $A$, respectively. The dimension of the subspace $R(A)$ is called the {\it rank of $A$} and denoted by $\rm rk(A)$. The {\it Moore-Penrose inverse} of $A\in \mnr$ is denoted by $A^\dag$ and is defined as the unique matrix $X \in \mnr $ that satisfies the equations 
\begin{equation}\label{MP}
AXA = A,\hspace{.25cm} XAX = X,\hspace{.25cm} (AX)^T = AX\hspace{.25cm}\hbox{and}\hspace{.25cm} (XA)^T = XA.
\end{equation}
Of course, when $A$ is an invertible matrix, then $A^{\dag}=A^{-1}.$

A tool that will be used here is the notion of the full-rank factorization. Consider a matrix $A \in \mnr$ with $\rm rk(A)=r>0$. We say that $A$ has a {\it full-rank factorization} if there exist $F\in \mathbf{M}_{n\times r}(\mathbb{R}),$ $G\in \mathbf{M}_{r\times n}(\mathbb{R})$ such that $A=FG$  with $\rm rk(F)=\rm rk(G)=r$. It is well known that any nonzero matrix has a full-rank factorization.
Let $A=FG$ be a full-rank factorization of $A$. Then, $$A^{\dag}=G^{\dag}F^{\dag}=G^T(GG^T)^{-1}(F^TF)^{-1}F^T.$$ 
For a comprehensive treatment on Moore-Penrose inverses, their properties and applications, we recommend the treatise \cite{BG03}.

\begin{ex}\label{ex1}
Let $\Gamma$ be the tree  given by
$$\begin{tikzpicture}[node distance={15mm}, thick, main/.style = {draw, circle}] 
\node[main] (1) {$1$}; 
\node[main] (2) [ right of=1] {$2$}; 
\node[main] (3) [ right of=2] {$3$}; 
\node[main] (4) [below of=2] {$4$};
\draw[-] (1) -- (2); 
\draw[-] (2) -- (4); 
\draw[-] (2) -- (3); 
\end{tikzpicture} $$
Then the matrix $A=\begin{pmatrix}
-2 & 2 & -4 & 2\\
-1 & 1 & -2 & 1\\
-1 & 1 & ~~1 & 1\\
-1 & 1 & -2 & 2
\end{pmatrix}$ has $\Gamma$-treeangle property. Here, 
$$A^\dag=\frac{1}{30}\begin{pmatrix}
   -8 & -4 & -10 & ~~15\\
   ~~8 & ~~4 & ~~10 & -15\\
   -4 &  -2 & ~~10 & ~~0\\
   -12 &  -6 & ~~0 & ~~30
\end{pmatrix}$$
and so $G(A^\dag)=G,$ given by the graph 
$$\begin{tikzpicture}[node distance={15mm}, thick, main/.style = {draw, circle}] 
\node[main] (1) {$1$}; 
\node[main] (2) [ right of=1] {$2$}; 
\node[main] (3) [ above of=2] {$3$}; 
\node[main] (4) [below of=2] {$4$};
\draw[-] (1) -- (2); 
\draw[-] (2) -- (4); 
\draw[-] (2) -- (3); 
\draw[-] (1) -- (4);
\draw[-] (3) -- (1); 
\end{tikzpicture} $$
is not a tree.
\end{ex}

\begin{ex}
The matrix $A=\begin{pmatrix}
~~1 & ~~2 & 0 & 0\\
-1 & ~~1 & 2 & 1\\
~~0 & ~~1 & 0 &0\\
~~0 & -1 & 0 & 0
\end{pmatrix}$ is $\Gamma$-treediagonal, where $\Gamma$ is the tree of Example \ref{ex1}. We have  
  $$A^\dagger=\frac{1}{10}\begin{pmatrix}
10 & 0 & -10 & ~~10\\
 0 & 0 & ~~5 & -5\\
 4 & 4 & -6 &~~6\\
 2 & 2 & -3 & ~~3
\end{pmatrix}.$$
Set $A^\dag=(b_{ij}).$ For $\{2,4\}\in E(\Gamma)$, we have $b_{22}=0$ so that $b_{22}b_{34}=0.$ However, $b_{32}b_{24}=-20$ and so $b_{22}b_{34}\neq b_{32}b_{24},$ showing that $A^\dag$ does not have the $\Gamma$-treeangle property. 
\end{ex}

These examples naturally motivate the following three questions, given that $A$ has the $\Gamma$-treeangle property for a tree $\Gamma$:

(i) How do we describe $G(A^\dag)$?\\
(ii) Under what conditions, is $G(A^\dag)$ a tree?  \\
(iii) In particular, when is $A^\dag,$ a $\Gamma$-treediagonal matrix?

Given $A \in \mathbf{M}_{n+1}(\mathbb{R}),$ we call the leading principal submatrix of $A$ corresponding to the vertex $v_i\in G(A)$ to be the matrix of order $n$ obtained by deleting the $i$-th row and the $i$-th column of $A$.
In this article, we present answers, for the particular case when a specific leading principal submatrix of $A \in \mathbf{M}_{n+1}(\mathbb{R})$ of order $n,$ is invertible. 
This is primarily motivated by the recent considerations made in \cite{KS24}.
We defer the study of the general problem, without the assumption of nonsingularity of the leading principal submatrix, and to questions relating to the converse of these statements, to a subsequent investigation.

In this context, it is worth highlighting the recent contributions \cite{AKS24, KS24}. In \cite{KS24}, the second and third authors demonstrated that, in general, the Moore-Penrose inverse (as well as another generalized inverse, viz., the group inverse) of singular matrices possessing the triangle property are not tridiagonal. Specifically, they established that when the leading principal submatrix of such a matrix is nonsingular, the corresponding generalized inverses are pentadiagonal (see \cite[Theorem 3.2]{KS24}). In a subsequent work \cite{AKS24}, a complete characterization was provided for singular matrices with the triangle property whose Moore-Penrose inverse (or group inverse) is tridiagonal. The converse implications were also thoroughly investigated.

\begin{lem}\label{nztreeangle}
    Let $A=(a_{ij})\in M_{n}(\mathbb{R}),n\ge 3$ be a matrix satisfying the $\Gamma$-treeangle property for a tree $\Gamma.$ If $a_{st}=a_{ts}=0$ for some $\{v_s,v_t\}\in E(\Gamma),$ then $A$ is a block diagonal matrix, and each block satisfies the treeangle property with respect to the corresponding subgraph of  $\Gamma.$ 
\end{lem}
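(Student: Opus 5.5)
Deleting the edge $e=\{v_s,v_t\}$ splits $\Gamma$ into two subtrees $\Gamma_s$ and $\Gamma_t$, containing $v_s$ and $v_t$ respectively, with vertex sets $V_s$ and $V_t$. The plan is to show that $a_{ij}=a_{ji}=0$ whenever $v_i\in V_s$ and $v_j\in V_t$. After a simultaneous permutation of rows and columns that lists $V_s$ first, $A$ is then block diagonal $\mathrm{diag}(A_s,A_t)$, where $A_s$ and $A_t$ are the principal submatrices indexed by $V_s$ and $V_t$.

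The key observation is that for $v_i\in V_s$ and $v_j\in V_t$, the unique path from $v_i$ to $v_j$ in $\Gamma$ runs from $v_i$ to $v_s$ inside $\Gamma_s$, then crosses $e$, then runs from $v_t$ to $v_j$ inside $\Gamma_t$. Hence both $v_s$ and $v_t$ lie on this path. The treeangle property with $k=s$ gives $a_{ij}a_{ss}=a_{is}a_{sj}$. I then split into cases.

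\emph{Case 1: $a_{ss}\neq 0$.} Since $v_t$ lies on the path from $v_s$ to $v_j$, the treeangle property gives $a_{sj}a_{tt}=a_{st}a_{tj}=0$. This alone does not force $a_{sj}=0$ when $a_{tt}=0$, so I will instead use the triplet $(v_s,v_j)$ with $v_t$ as the middle vertex in the other orientation, together with $(v_i,v_j)$ through $v_t$. I need care here.

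\emph{Case 2: $a_{ss}=a_{tt}=0$.} I use $a_{ij}a_{tt}=a_{it}a_{tj}$ together with the path from $v_i$ to $v_t$, which passes through $v_s$, giving $a_{it}a_{ss}=a_{is}a_{st}=0$. This degenerate situation is the main obstacle, since all the available identities can become $0=0$.

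\textbf{Possible counterexample.} I would first test whether the statement can fail. Take $n=3$ with the path $1-2-3$ and $e=\{1,2\}$, and set $a_{12}=a_{21}=a_{22}=0$. The remaining treeangle conditions are only the path $1,2,3$ with middle vertex $2$: $a_{13}a_{22}=a_{12}a_{23}$, which reads $0=0$, and similarly $a_{31}a_{22}=a_{32}a_{21}$, also $0=0$. So $a_{13}$ is free and can be nonzero, which would break block diagonality. The intended reading is therefore presumably that $a_{ii}\ne0$ for interior vertices, as in Theorem \ref{klein1}, or else the blocks are meant after grouping the vertices by connectivity of $G(A)$. I would adopt the hypothesis $a_{kk}\neq0$ for interior $v_k$.

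\textbf{Proof under this hypothesis.} Since $n\ge3$, at least one of $v_s,v_t$ is interior; say $v_s$ is interior, so $a_{ss}\neq0$.
\begin{itemize}
\item For $v_j\in V_t$, the path from $v_s$ to $v_j$ passes through $v_t$. If $v_j\neq v_t$, then $v_t$ is interior, so $a_{tt}\ne0$. From $a_{sj}a_{tt}=a_{st}a_{tj}=0$ we get $a_{sj}=0$, and symmetrically $a_{js}=0$. If $v_j=v_t$, these entries vanish by assumption.
\item For any $v_i\in V_s$, the identity $a_{ij}a_{ss}=a_{is}a_{sj}=0$ then gives $a_{ij}=0$. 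The same argument applied to the transposed identity gives $a_{ji}=0$.
\end{itemize}

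Finally, each block $A_s$ satisfies the $\Gamma_s$-treeangle property. Indeed, $\Gamma_s$ is an induced subtree of $\Gamma$, so paths in $\Gamma_s$ are exactly the paths of $\Gamma$ between vertices of $V_s$. The defining identities for $A_s$ are therefore a subset of those for $A$. The same holds for $A_t$ and $\Gamma_t$.
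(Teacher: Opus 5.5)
Your final argument is, in substance, the paper's: for $v_i$ on the $v_s$-side and $v_j$ on the $v_t$-side, the unique path crosses the deleted edge. The treeangle identities at the endpoints of that edge then force $a_{ij}=a_{ji}=0$, and the blocks inherit the treeangle property because the two subtrees are induced.

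The difference is in how the diagonal is handled. The paper writes $a_{ij}=a_{it}a_{t(t+1)}a_{(t+1)j}/(a_{tt}a_{(t+1)(t+1)})$ in one step. This tacitly divides by both endpoint diagonal entries, although the lemma assumes nothing about the diagonal. Your $P_3$ example ($a_{12}=a_{21}=a_{22}=0$, $a_{13}\neq 0$) is a valid counterexample to the lemma as literally stated. So adding the hypothesis $a_{kk}\neq 0$ for interior $v_k$ is necessary, not a weakness of your proof. That hypothesis is in force in every later result where the lemma is used through Remark \ref{rnztreeangle}.

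Your two-step factorization goes as follows. First you get $a_{sj}=a_{js}=0$ via $v_t$, treating $v_j=v_t$ separately. Then you get $a_{ij}=a_{ji}=0$ via $v_s$. This only ever divides by diagonal entries of interior vertices. It therefore also covers the case where one endpoint of the deleted edge is a pendant vertex with zero diagonal entry, where the paper's displayed formula is not available.

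In a write-up, state the extra hypothesis up front and delete the unfinished exploratory Cases 1 and 2.
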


\begin{proof}
    Without loss of generality, assume $s=t+1.$ Removing the edge $\{v_t,v_{t+1}\}$ from the tree $\Gamma$, partitions it into two subtrees, namely $\Gamma_1,\Gamma_2$, such that $v_t\in V(\Gamma_1)$ and $v_{t+1}\in V(\Gamma_2)$. Relabel the vertices, if necessary, so that the vertices of $\Gamma_1$ are $v_1,v_2,\ldots, v_t$, while the vertices of $\Gamma_2$ are $v_{t+1},v_{t+2},\ldots, v_n$. 
    We shall prove that the matrix $A$ has the block diagonal form $$A=\begin{pmatrix}
        A(1:t) & 0\\
        0 & A(t+1:n)
    \end{pmatrix},$$ where $A(1:t),A(t+1:n)$ satisfy the treeangle property with respect to the trees $\Gamma_1,\Gamma_2$ respectively. 
    To show this, it is enough to prove that $a_{ij}=0$ whenever $1\le i \le t$ and $j>t$, and  whenever $i>t,~1\le j \le t.$
    \\
    Consider the case $1\le i \le t$ and $j>t$. Then the vertex $v_i$
	belongs to $\Gamma_1$ and the vertex $v_j$ belongs to $\Gamma_2$. Since the unique path joining $v_i$ and $v_j$ passes through the edge $\{v_t,v_{t+1}\}$, the $\Gamma$-treeangle property gives $$a_{ij}=\frac{a_{it}a_{t(t+1)}a_{(t+1)j}}{a_{tt}a_{(t+1)(t+1)}}.$$
    Because $a_{t(t+1)}=0,$ it immediately follows that $a_{ij}=0.$ 
    Applying the same argument to the case $i>t,~1\le j \le t,$ we also obtain $a_{ij}=0.$ Therefore, $$A=\begin{pmatrix}
        A(1:t) & 0\\
        0 & A(t+1:n)
    \end{pmatrix}.$$
    Finally, since the treeangle property is preserved under principal submatrices corresponding to the connected components of the tree, the matrices $A(1:t)$ and $ A(t+1:n)$ satisfy the treeangle property with respect to the trees $\Gamma_1,\Gamma_2$.
\end{proof}
\begin{rem}\label{rnztreeangle}
    Let $A=diag(A_1,A_2,\ldots,A_k).$
    Then $$A^\dag=diag(A_1^\dag,A_2^\dag,\ldots,A_k^\dag).$$ Therefore, by the above observation and Lemma \ref{nztreeangle}, it is sufficient to restrict our attention to $\Gamma$-treeangle property matrix $A=(a_{ij})$ such that, for every edge $\{v_i,v_j\}\in E(\Gamma),$ atleast one of $a_{ij}$ or $a_{ji}$ is nonzero. 
\end{rem}

\section{Graph of Moore-Penrose Inverse}

In the first main result, we identify a set $H$ of new edges that need to be included into the edge set $E (\Gamma)$, to obtain $E(G)$, for the graph $G:=G(A^{\dag})$. Recall that, for any vertex $v$, the symbol $N(v)$ stands for all those vertices that are adjacent with $v$. We shall make use of the fact that, if nonzero columns $a^i$ and $a^j$ of a square matrix $A$  are linearly dependent, then  $N(v_i)\setminus\{v_j\}=N(v_j)\setminus \{v_i\}$ in the graph $G:=G(A).$

Suppose that $$A=\begin{pmatrix}
        B & c\\
        d^T & a_{(n+1)(n+1)}
       \end{pmatrix}$$ is a singular matrix, where $B$ be an  invertible matrix. Then the Schur complement of $B$ in $A$ is defined as, $$A/B=a_{(n+1)(n+1)}-d^TB^{-1}c.$$
       Moreover, the determinant of $A$ is given by $\det(A)=\det(B)\det(A/B).$

In what follows, we always assume that $\Gamma$ is a tree on $n+1$ vertices. Moreover,
we assume that the vertices of the tree $\Gamma$ are labeled in such a way that $v_{n+1}$ is a pendant vertex. For a given $A\in M_{n+1}(\mathbb{R})$, the leading principal submatrix corresponding to $v_{n+1}$, denoted by $A[n+1]$ (For convenience in computation, we denote it by 
$B$ in the following results), is invertible.

Our first result provides an answer to the first question posed earlier.

\begin{thm}\label{AdagG}
Let $A=(a_{ij})\in M_{n+1}(\mathbb{R}),n\ge 3$ be a singular matrix satisfying the $\Gamma$-treeangle property for a tree $\Gamma.$ Suppose that $a_{ii}\neq 0$ for all interior vertices $v_i$. Assume that $B\in M_n(\mathbb{R}),$ the leading principal submatrix of $A,$ is invertible. Let $v_{n+1}$ be a pendant vertex of $\Gamma$ and denote by $v_{r}$, the unique vertex adjacent with $v_{n+1}$. Set $G:=G(A^\dag)$. Consider the following set of edges:
$$N'_r:=\{\{v_j,v_{n+1}\}: v_j\in N(v_{r})\setminus \{v_{n+1}\} ~\text{and} ~a_{jr}\neq0\}$$
$$N''_r:=\{\{v_j,v_{n+1}\}: v_j\in N(v_{r})\setminus \{v_{n+1}\} ~\text{and} ~ a_{rj}\neq 0\}$$
and define the set of edges $H$ as follows:

$(a)$ If $a_{r(n+1)}\neq 0$ and $a_{(n+1)r}=0,$ then we set $H: = N''_r,$  \\
$(b)$ If $a_{(n+1)r}\neq 0$ and $a_{r(n+1)}=0,$ then we set $H: =N'_r.$\\
$(c)$ If both $a_{r(n+1)}$ and $a_{(n+1)r}$ are nonzero, we set $H: = N_r'\cup N_r'',$ \\
We then have the following: \\
$(1)$ $d_{ij}\neq 0$  when $\{v_i,v_j\}\in E\big (\Gamma\setminus{v_{n+1}}\big )$. \\ 
$(2)$ Let  
$$s_{r}=\frac{1}{a_{rr}}\Big(1+\displaystyle \sum_{\{k|v_k\in N(v_{r})\}\atop  k\neq n+1}\frac{a_{rk}a_{kr}}{d_{rk}}\Big).$$
$(i)$ If $s_{r}\neq0,$ then $E(G)=E(\Gamma)\cup H;$
\\
$(ii)$ If $s_{r}=0,$ then $E(G)=(E(\Gamma)\setminus\{v_{r},v_{n+1}\}) \cup H.$
\end{thm}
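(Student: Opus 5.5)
The plan is to build an explicit full-rank factorization of $A$ from the invertible block $B$, use the treeangle property to collapse the border vectors to multiples of $e_r$, and then read the pattern of $A^\dag$ off Theorem \ref{klein1}. Write
$$A=\begin{pmatrix} B & c\\ d^T & a_{(n+1)(n+1)}\end{pmatrix}.$$

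\emph{Step 1 (the border of $A$).} Since $v_{n+1}$ is pendant with unique neighbour $v_r$, every path from $v_j$ ($j\le n$, $j\neq r$) to $v_{n+1}$ passes through $v_r$. Moreover $v_r$ is interior, because $n\ge 3$, so $a_{rr}\neq0$. The treeangle property then gives $a_{j(n+1)}=a_{jr}a_{r(n+1)}/a_{rr}$ and $a_{(n+1)j}=a_{(n+1)r}a_{rj}/a_{rr}$. Hence
$$c=\beta\, Be_r,\qquad d^T=\alpha\, e_r^TB,\qquad \beta=\frac{a_{r(n+1)}}{a_{rr}},\quad \alpha=\frac{a_{(n+1)r}}{a_{rr}}.$$
Since $\det A=\det B\,(A/B)$ and $A$ is singular, $A/B=0$, that is, $a_{(n+1)(n+1)}=\alpha\beta a_{rr}$. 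Therefore
$$A=\begin{pmatrix} I\\ \alpha e_r^T\end{pmatrix}B\begin{pmatrix} I & \beta e_r\end{pmatrix},$$
which is a full-rank factorization. By Remark \ref{rnztreeangle} (and Lemma \ref{nztreeangle}) at least one of $\alpha,\beta$ is nonzero, which is exactly the case split (a)--(c).

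\emph{Step 2 (part (1) and a formula for $A^\dag$).} $B$ satisfies the treeangle property for the subtree $\Gamma\setminus\{v_{n+1}\}$, and its interior vertices are interior in $\Gamma$. Theorem \ref{klein1} therefore applies to $B$: it gives (1), shows that $B^{-1}$ is treediagonal, and supplies explicit entries. Applying the Moore-Penrose formula for full-rank factorizations, with $(I+\beta^2e_re_r^T)^{-1}=D_\beta:=I-\frac{\beta^2}{1+\beta^2}e_re_r^T$ and $D_\alpha$ defined analogously, we get
$$A^\dag=\begin{pmatrix} I\\ \beta e_r^T\end{pmatrix} D_\beta B^{-1} D_\alpha\begin{pmatrix} I & \alpha e_r\end{pmatrix}.$$
$D_\alpha$ and $D_\beta$ are invertible diagonal matrices. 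The leading $n\times n$ block $D_\beta B^{-1}D_\alpha$ has the same zero pattern as $B^{-1}$, which is treediagonal (Remark \ref{diagtree}). For an edge $\{v_i,v_j\}$ of the subtree, the corresponding entries are scalar multiples of $-a_{ij}/d_{ij}$ and $-a_{ji}/d_{ij}$, and at least one of them is nonzero.

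\emph{Step 3 (last row and column).} The column $n+1$ of $A^\dag$ is $\alpha/(1+\alpha^2)$ times column $r$ of $D_\beta B^{-1}$, with a further row entry proportional to $\alpha\beta\,(B^{-1})_{rr}$. The row $n+1$ is $\beta/(1+\beta^2)$ times row $r$ of $B^{-1}D_\alpha$ (suitably scaled). By \eqref{invfor}, $(B^{-1})_{jr}\neq0$ for $j\neq r$ iff $v_j\in N(v_r)\setminus\{v_{n+1}\}$ and $a_{jr}\neq0$; these entries produce the edges $N'_r$ when $\alpha\ne0$. Likewise, $(B^{-1})_{rj}\neq0$ iff $a_{rj}\neq0$, which produces $N''_r$ when $\beta\neq0$. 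No other entries of the last row or column are nonzero, since $B^{-1}$ is treediagonal. This yields the set $H$ in each of cases (a), (b), (c).

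The entries $(r,n+1)$ and $(n+1,r)$ are nonzero multiples of $\alpha(B^{-1})_{rr}$ and $\beta(B^{-1})_{rr}$. The corner entry is a multiple of $\alpha\beta(B^{-1})_{rr}$, but it is a diagonal entry and irrelevant for edges. So the edge $\{v_r,v_{n+1}\}$ is present iff $(B^{-1})_{rr}\neq0$.

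\emph{Main obstacle: identifying $(B^{-1})_{rr}$ with $s_r$.} If $v_r$ has degree $\ge2$ in the subtree, this is the third line of \eqref{invfor}. If $v_r$ is pendant in the subtree, with unique neighbour $v_k$, I will check directly that
$$\frac{a_{kk}}{d_{rk}}=\frac{1}{a_{rr}}\Big(1+\frac{a_{rk}a_{kr}}{d_{rk}}\Big),$$
using $d_{rk}+a_{rk}a_{kr}=a_{rr}a_{kk}$. This gives (i) and (ii), provided there are no cancellations anywhere else. The latter holds because all the scalars involved ($\alpha/(1+\alpha^2)$, the diagonal entries of $D_\alpha,D_\beta$, and so on) are nonzero whenever the relevant $\alpha$ or $\beta$ is nonzero.
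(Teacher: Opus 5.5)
Your proposal is correct and follows essentially the same route as the paper: the treeangle property reduces the border to $c=\beta B\mathbf{e}^r$ and $d^T=\alpha(\mathbf{e}^r)^TB$, the vanishing Schur complement gives the same full-rank factorization and the same formula for $A^\dag$, and the zero pattern is then read off from Theorem \ref{klein1} applied to $B$. Your version is in fact slightly more careful than the paper's write-up: you handle the case where $v_r$ is pendant in $\Gamma\setminus\{v_{n+1}\}$ (the second line of \eqref{invfor}), and you correctly attach $N'_r$ to the last column when $a_{(n+1)r}\neq0$ and $N''_r$ to the last row when $a_{r(n+1)}\neq0$, whereas the paper's text interchanges the scalars $a_{r(n+1)}/a_{rr}$ and $a_{(n+1)r}/a_{rr}$ at that step.
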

\begin{proof}
We have
\begin{eqnarray}\label{Arep}
A=\begin{pmatrix}
        B & c\\
        d^T & a_{(n+1)(n+1)}
       \end{pmatrix}.
\end{eqnarray} 
Since $A$ has $\Gamma$-treeangle property, $v_{n+1}$ is a pendant vertex adjacent to $v_{r}$, and the unique path from any vertex to the vertex $v_{n+1}$ passes through $v_r$, we obtain
\begin{eqnarray}\label{eqc}
    c& = & (a_{1(n+1)},a_{2(n+1)},\ldots,a_{n(n+1)})^T\nonumber\\& = &
    \Big(\frac{a_{1r}a_{r(n+1)}}{a_{rr}},\frac{a_{2r}a_{r(n+1)}}{a_{rr}},\ldots,\frac{a_{nr}a_{r(n+1)}}{a_{rr}}\Big)^T\nonumber\\ & = &
    \frac{a_{r(n+1)}}{a_{rr}}(a_{1r},a_{2r},\ldots,a_{nr})^T\nonumber\\
    & = & \frac{a_{r(n+1)}}{a_{rr}} Be^{r},
\end{eqnarray} 

We also obtain, 
\begin{eqnarray}\label{eqd}
d^T & = & (a_{(n+1)1},a_{(n+1)2},\ldots,a_{(n+1)n})\nonumber\\& = &
    \Big(\frac{a_{(n+1)r}a_{r1}}{a_{rr}},\frac{a_{(n+1)r}a_{r2}}{a_{rr}},\ldots,\frac{a_{(n+1)r}a_{rn}}{a_{rr}}\Big)\nonumber\\ & = &
    \frac{a_{(n+1)r}}{a_{rr}}(a_{r1},a_{r2},\ldots,a_{rn})\nonumber\\
    & = & \frac{a_{(n+1)r}}{a_{rr}} (e^{r})^TB.
\end{eqnarray}
The matrix $A$ is singular and $B$ is nonsingular. So, the Schur complement of $B$ in $A$ is zero. Thus, we have 
\begin{eqnarray*}
a_{(n+1)(n+1)} & = & d^TB^{-1}c
\nonumber\\
& = &  \frac{a_{r(n+1)}a_{(n+1)r}}{a_{rr}^2}(e^{r})^TBe^{r}\\
& = & \frac{a_{r(n+1)}a_{(n+1)r}}{a_{rr}^2}b_{rr}\\
& = & \frac{a_{r(n+1)}a_{(n+1)r}}{a_{rr}},
\end{eqnarray*}
so that 
\begin{eqnarray}\label{schureq1}
a_{rr}a_{(n+1)(n+1)} =  a_{r(n+1)}a_{(n+1)r}.
\end{eqnarray}
Thus, we have the block decomposition of $A$ given as
$$A=\begin{pmatrix}
    B & \frac{a_{r(n+1)}}{a_{rr}} B{\bf e}^r\\
    \frac{a_{(n+1)r}}{a_{rr}} ({\bf e}^r)^TB & \frac{a_{r(n+1)}a_{(n+1)r}}{a_{rr}} 
\end{pmatrix},$$
If we set 
$$F=  \begin{pmatrix}
       B\\\frac{a_{(n+1)r}}{a_{rr}}({\bf e}^r)^TB 
       \end{pmatrix}=\begin{pmatrix}
       I\\\frac{a_{(n+1)r}}{a_{rr}}({\bf e}^r)^T
       \end{pmatrix}B,$$\\
       and $$G=
       \begin{pmatrix}
           I & \frac{a_{r(n+1)}}{a_{rr}}{ {\bf e}^r}
       \end{pmatrix},$$
       then, $A=FG$ is a full rank factorization. 
We have
\begin{eqnarray*}
F^TF & = & B^T{\begin{pmatrix}
          I, \frac{a_{(n+1)r}}{a_{rr}} {\bf e}^r
      \end{pmatrix}}{\begin{pmatrix}
          I\\\frac{a_{(n+1)r}}{a_{rr}} ({\bf e}^r)^T
      \end{pmatrix}}B\\
      & = & B^T(I+\frac{a_{(n+1)r}^2}{a_{rr}^2}{\bf e}^r({\bf e}^r)^T)B\\
      & = & B^TD_1B.
      \end{eqnarray*}
where $D_1=I+\frac{a_{(n+1)r}^2}{a_{rr}^2}{\bf e}^r({\bf e}^r)^T$ is an invertible diagonal matrix. Since $A=FG$ is a full rank factorization, $F^TF $ is invertible and one has $$(F^TF)^{-1}=B^{-1}D_1^{-1}(B^T)^{-1}.$$
Now, \begin{eqnarray*}
    F^{\dagger}=(F^TF)^{-1}F^T & = & B^{-1}D_1^{-1}(B^T)^{-1}B^T{\begin{pmatrix}
     I, \frac{a_{(n+1)r}}{a_{rr}} {\bf e}^r
    \end{pmatrix}}\\
    & = & B^{-1}D_1^{-1}{\begin{pmatrix}
     I,\frac{a_{(n+1)r}}{a_{rr}} {\bf e}^r
    \end{pmatrix}}.
    \end{eqnarray*}
Also,
      \begin{eqnarray*}
        GG^T & = &
      {\begin{pmatrix}
          I, \frac{a_{r(n+1)}}{a_{rr}}{\bf e}^r
      \end{pmatrix}}{\begin{pmatrix}
          I\\
    \frac{a_{r(n+1)}}{a_{rr}} ({\bf e}^r)^T
      \end{pmatrix}}\\
      & = & I+\frac{a_{r(n+1)}^2}{a_{rr}^2}{\bf e}^r({\bf e}^r)^T.
       \end{eqnarray*}
Set $D_2:=I+\frac{a_{r(n+1)}^2}{a_{rr}^2}{\bf e}^r({\bf e}^r)^T$. Then $D_2$ is also an invertible diagonal matrix. Thus, 
  \begin{eqnarray*}
    G^{\dagger} & = & G^T(GG^T)^{-1}=
    \begin{pmatrix}
     I\\
    \frac{a_{r(n+1)}}{a_{rr}} ({\bf e}^r)^T
    \end{pmatrix}D_2^{-1}.
    \end{eqnarray*}
Thus, the Moore-Penrose inverse of $A$ is given by  
\begin{eqnarray}\label{Adagfor}
    A^{\dagger}& = & G^{\dagger}F^{\dagger} \nonumber\\
    & = & {\begin{pmatrix}
    I\\
    \frac{a_{r(n+1)}}{a_{rr}} ({\bf e}^r)^T
\end{pmatrix}}D_2^{-1}B^{-1}D_1^{-1}{\begin{pmatrix}
    I,\frac{a_{(n+1)r}}{a_{rr}}{\bf e}^r
\end{pmatrix}} \nonumber\\
& = &\begin{pmatrix}
    D_2^{-1}B^{-1}D_1^{-1}& \frac{a_{(n+1)r}}{a_{rr}} D_2^{-1}B^{-1}D_1^{-1}{\bf e}^r\\
    \frac{a_{r(n+1)}}{a_{rr}} ({\bf e}^r)^TD_2^{-1}B^{-1}D_1^{-1}& \frac{a_{(n+1)r}a_{r(n+1)}}{a_{rr}^2}  ({\bf e}^r)^TD_2^{-1}B^{-1}D_1^{-1}{\bf e}^r\\
\end{pmatrix}
 \end{eqnarray}
 Note that $\Gamma \setminus \{v_{n+1}\}$ is a tree and that $B$ has the $\Gamma \setminus \{v_{n+1}\}$-treeangle property. Since $B$ is invertible, by Theorem \ref{klein1}, we infer that $d_{ij}\neq 0$ for $i,j<n+1,$ and $B^{-1}$ is $\Gamma \setminus \{v_{n+1}\}$-treediagonal.  Since $D_1,D_2$ are invertible diagonal matrices, using Remark \ref{diagtree}, $D_2^{-1}B^{-1}D_1^{-1}$ is $\Gamma \setminus \{v_{n+1}\}$-treediagonal.  Observe that the tree $\Gamma \setminus \{v_{n+1}\}$ is the same as $G \setminus \{v_{n+1}\}$. Set $A^{\dag}=:(c_{ij})$,$~D_1=:diag(d_1,d_2,\ldots,d_n)$ and $D_2=:diag(q_1,q_2,\ldots,q_n).$ Then $\{v_r,v_{n+1}\}\in E(G)$ if and only if either  $c_{r(n+1)}$ or $c_{(n+1)r}$ is nonzero.  From (\ref{invfor}) and (\ref{Adagfor}), we obtain  $$c_{rr}=({\bf e}^r)^TD_2^{-1}B^{-1}D_1^{-1}{\bf e}^r=\frac{1}{a_{rr}d_rq_r}\Big(1+\displaystyle \sum_{\{k|v_k\in N(v_{r})\}\atop  k\neq n+1}\frac{a_{rk}a_{kr}}{d_{rk}}\Big)=\frac{s_{r}}{d_rq_r}.$$ 
Thus,\begin{eqnarray}\label{eqsi}
    c_{(n+1)r} & = & \nonumber\frac{a_{(n+1)r}}{a_{rr}}({\bf e}^r)^TD_2^{-1}B^{-1}D_1^{-1}{\bf e}^r \\\nonumber
    & = & \frac{a_{(n+1)r}}{a_{rr}}c_{rr} \\
    & = & \frac{a_{(n+1)r}s_r}{a_{rr}d_rq_r}.
\end{eqnarray}
Similarly, \begin{eqnarray}\label{eqsi1}
    c_{r(n+1)}&=&\frac{a_{r(n+1)}}{a_{rr}}({\bf e}^r)^TD_2^{-1}B^{-1}D_1^{-1}{\bf e}^r\nonumber\\&=&\frac{a_{r(n+1)}s_r}{a_{rr}d_rq_r}.
\end{eqnarray} 
It now follows that, $\{v_r,v_{n+1}\}\in E(\Gamma)$, if and only if either $a_{r(n+1)}$ or $a_{(n+1)r}$ is nonzero. Equivalently, $\{v_r,v_{n+1}\}\in E(G)$ if and only if $s_r\neq 0.$ So far, we have proved the following:
If $H$ denotes the set of edges that are incident on $v_{n+1}$, excluding the edge $\{v_r,v_{n+1}\}$, then
\begin{enumerate}
    \item $E(G)=E(\Gamma)\cup H,$ if $s_r\neq0.$
    \item $E(G)=(E(\Gamma)\setminus\{v_r,v_{n+1}\}) \cup H,$ if $s_r=0$.
\end{enumerate} 

From (\ref{Adagfor}), we observe that the last column of $A^{\dag}$ is a constant multiple of the $r$-th column of $A^{\dag}$, where the constant is given by $\frac{a_{r(n+1)}}{a_{rr}}$. Similarly, the last row of $A^{\dag}$ is a constant multiple of the $r$-th row of $A^{\dag}$, where the multiplicative constant in this case, is given by $\frac{a_{(n+1)r}}{a_{rr}}$.

Thus, if $a_{r(n+1)}$ and $a_{(n+1)r}$ are both nonzero, then 
the vertices $v_r$ and $v_{n+1}$ have the same neighbours in $G.$ Thus, $(a)$ is established.\\
Suppose $a_{r(n+1)}\neq0 $ and $a_{(n+1)r}=0.$ 
Using (\ref{Adagfor}), we conclude that the neighbours of $v_{n+1}$ in $G$ only depend on the nonzero entries in the $r$-th column. Since $B$ is invertible,  using (\ref{invfor}), we obtain $c_{jr}=\frac{-a_{jr}}{d_jq_rd_{rj}}$ for $\{v_j,v_r\}\in E(\Gamma)$ and $j\neq n+1.$ Thus, $\{v_j,v_{n+1}\}\in H$ if and only if $a_{jr}\neq 0$ and $v_j\in N(v_r)$, proving $(b)$. \\
 Since the treeangle property is invariant under the operation of matrix transposition, $(c)$ follows similarly.
\end{proof}

\begin{rem}
In the notation of the above result, let $v_r$ be the unique vertex adjacent to $v_{n+1}.$ Then, by (\ref{eqc}),(\ref{schureq1}), it follows that the $r$th column and $(n+1)$th column of $A$ are linearly dependent. Consider a vertex $v_i,~i\neq r,n+1.$ Then the principal submatrix $A[i]$ is such that its columns corresponding to the vertices $v_r, v_{n+1}$ are linearly dependent. Hence $A[i]$ is singular. Thus, $v_r$ is the unique vertex satisfying the said property. Next, let $A$ be a matrix for which this property holds. Consider a permutation of the rows and columns of $A$ in such a way that the last row/column corresponds to another pendant vertex of $~\Gamma.$ Denote the new matrix obtained thus, by $C:=PAP^T$. By the argument above, the principal submatrix obtained by deleting the last row and the last column of $C$, is singular. However, the graph of $C^{\dag}$ is the same as the graph of $A^{\dag}$, but for a relabelling of the vertices (See Remark \ref{lps}, for instance).
\end{rem}

\begin{ex}\label{ex12}
Let $\Gamma$ be the tree given by
$$\begin{tikzpicture}[node distance={15mm}, thick, main/.style = {draw, circle}] 
\node[main] (1) {$1$}; 
\node[main] (2) [ right of=1] {$2$}; 
\node[main] (3) [right of=2] {$3$}; 
\node[main] (4) [below right of=3] {$4$}; 
\node[main] (5) [above right of=3] {$5$}; 
\draw[-](1) -- (2); 
\draw[-] (2) -- (3); 
\draw[-] (3) -- (4); 
\draw[-] (3) -- (5);
\end{tikzpicture}$$
It may be verified that both the matrices \begin{center}
 $A_1=\begin{pmatrix}
~~1 & ~~1 & -1 & ~~1 & ~~1\\
-2 & -1 & ~~1 & -1 & -1\\
~~2 & ~~1 & ~~1 & -1 & -1\\
-2 & -1 & -1 & -1 & ~~1\\
-2 & -1 & -1 & ~~1 & ~~1
\end{pmatrix}$ and $A_2=\begin{pmatrix}
~~1 & ~~1 & -1 & ~~2 & ~~1\\
-2 & -1 & ~~1 & -2 & -1\\
~~2 & ~~1 & ~~1 & -2 & -1\\
-2 & -1 & -1 & -1 & ~~1\\
-2 & -1 & -1 & ~~2 & ~~1
\end{pmatrix}$
\end{center}
have the $\Gamma$-treeangle property. 
Then
we have, 

\begin{center}
$A_2^\dag=\frac{1}{24}\begin{pmatrix}
-24 & -24 & ~~0 & ~~0 & ~~0\\
~~48 & ~~36 & ~~6 & ~~0 & -6 \\
~~0 & ~~6 & -1 & -8 & ~~1\\
~~0 & ~~0 & -4 & -8 & ~~4\\
~~0 & -6 & ~~1 & ~~8 & -1
\end{pmatrix}.$ 
\end{center}
We consider the case when $v_5$ is taken as the pendant vertex, since the leading principal submatrix corresponding to $v_5$ is nonsingular. Here, $r=3, ~s_{3}\neq0$  and $a_{35},a_{53}$ are both nonzero. From $(c)$ of Theorem \ref{AdagG},  we infer that $$H=\{\{v_j,v_{5}\}: v_j\in N(v_{3})\setminus \{v_{5}\}\}=\{\{2,5\},\{4,5\}\}.$$ Thus, we obtain the edge set, $E(G)=E(\Gamma)\cup(\{2,5\},\{4,5\}).$ The graph $G$ given below
$$\begin{tikzpicture}[node distance={15mm}, thick, main/.style = {draw, circle}] 
\node[main] (1) {$1$}; 
\node[main] (2) [ right of=1] {$2$}; 
\node[main] (3) [right of=2] {$3$}; 
\node[main] (4) [below right of=3] {$4$}; 
\node[main] (5) [above right of=3] {$5$}; 
\draw[-](1) -- (2); 
\draw[-] (2) -- (3); 
\draw[-] (3) -- (4); 
\draw[-] (3) -- (5);
\draw[-] (2) -- (5);\draw[-] (4) -- (5);
\end{tikzpicture}$$  
confirms that $G=G(A_2^\dag).$

For the matrix $A_1,$ 
\begin{center}
$A_1^\dag=\frac{1}{4}\begin{pmatrix}
    -4 & -4 & ~~0 & ~~0 & ~~0\\
    ~~8 & ~~6 & ~~1 & ~~0 & -1\\
    ~~0 & ~~1 & ~~0 & -1 & ~~0\\
    ~~0 & ~~0 & -1 & -2 & ~~1\\
    ~~0 & -1 & ~~0 & ~~1 & ~~0
\end{pmatrix}$
\end{center}
For  $A_1$, we have $r=3, ~s_{3}=0$  and $a_{35},a_{53}$ are both nonzero. From $(c)$ of Theorem \ref{AdagG}, using similar argument as above, we have $H=\{\{2,5\},\{4,5\}\}$. Thus, $E(G)=(E(\Gamma)\setminus\{3,5\})\cup(\{2,5\},\{4,5\}).$ Thus, the graph $G$ is:
$$\begin{tikzpicture}[node distance={15mm}, thick, main/.style = {draw, circle}] 
\node[main] (1) {$1$}; 
\node[main] (2) [ right of=1] {$2$}; 
\node[main] (3) [right of=2] {$3$}; 
\node[main] (4) [below right of=3] {$4$}; 
\node[main] (5) [above right of=3] {$5$}; 
\draw[-](1) -- (2); 
\draw[-] (2) -- (3); 
\draw[-] (3) -- (4); 
\draw[-] (2) -- (5);\draw[-] (4) -- (5);
\end{tikzpicture}$$  
It is verified that $G=G(A_1^\dag)$. 
\end{ex}

\begin{rem}\label{lps}
There exist three pendant vertices in the graph of Example \ref{ex12}, viz., $v_1,v_4$ and $v_5$. The corresponding leading principal submatrices of $A_1$ (assigned subscripts, according to the label of the pendant vertex) are respectively,
$$B_1=\begin{pmatrix}
-1 & ~~1 & -1 & -1\\ ~~1 & ~~1 & -1 & -1\\ -1 & -1 & -1 & ~~1\\
-1 & -1 & ~~1 & ~~1 \end{pmatrix},\hspace{.15cm}B_4=\begin{pmatrix}
~~1 & ~~1 & -1 & ~~1\\ -2 & -1 & ~~1 & -1\\ ~~2 & ~~1 & ~~1 & -1\\ -2 & -1 & -1 & ~~1 \end{pmatrix},\hspace{.15cm}
B_5=\begin{pmatrix}
~~1 & ~~1 & -1 & ~~1\\ -2 & -1 & ~~1 & -1\\
~~2 & ~~1 & ~~1 & -1\\
-2 & -1 & -1 & -1 \end{pmatrix}.$$
Since ${\rm det}(B_1)={\rm det}(B_4)=0$ and ${\rm det}(B_5)=-4,$ Theorem 2.1 is applicable only for the vertex $v_5$. 
\end{rem}

\begin{ex}\label{ex122}
    Let $\Gamma$ be the tree given by
$$\begin{tikzpicture}[node distance={15mm}, thick, main/.style = {draw, circle}] 
\node[main] (1) {$1$}; 
\node[main] (2) [ right of=1] {$2$}; 
\node[main] (3) [right of=2] {$3$}; 
\node[main] (4) [below  of=2] {$4$}; 
\draw[-](1) -- (2); 
\draw[-] (2) -- (3); 
\draw[-] (2) -- (4); 
\end{tikzpicture}$$
Both the matrices 
\begin{center}
 $A_1=\begin{pmatrix}
0 & -3 & -21 & ~~3\\
5 & ~~1 & ~~7 & -1\\
0 & ~~0 & -1 & ~~0\\
0 & ~~0& ~~0 &~~0
\end{pmatrix}$ and $A_2=\begin{pmatrix}
1 & 0 & ~~0 & ~~0\\
2 & 1 & -1 & -1\\
2 & 1 & ~~5 & -1\\
0 & 0 & ~~0 &~~0
\end{pmatrix}$
\end{center}
have the $\Gamma$-treeangle property. Observe that the tree has three pendant vertices, $1$, $3$, and $4$, but in both cases, $A_1$ and $A_2$, the unique pendant vertex whose leading principal matrix is nonsingular is $4$. For the matrix $A_1$, $r=2~s_{2}=0, ~a_{24}\neq 0$ and $a_{42}= 0$. Thus, from $(a)$ of Theorem \ref{AdagG}, $$H=\{\{v_j,4\}: v_j\in N(2)\setminus \{4\} ~\text{and} ~a_{2j}\neq0\}.$$ Since $a_{21}\not=0$ and $a_{23}\not=0$,  $H=\{\{1,4\},\{3,4\}\}.$ Thus, $E(G)=(E(\Gamma)\backslash{\{2,4\}})\cup\{\{1,4\},\{3,4\}\}.$
We have, 
\begin{center}
$A_1^\dag=\frac{1}{30}\begin{pmatrix}
    ~~2 & 6 & ~~0 & 0\\
    -5 & 0 & ~~105 & 0\\
    ~~0 & 0 & -30 & 0\\
    ~~5 & 0 & -105 & 0
\end{pmatrix}$
\end{center}
and $G(A_1^\dag)$ is given by $$\begin{tikzpicture}[node distance={15mm}, thick, main/.style = {draw, circle}] 
\node[main] (1) {$1$}; 
\node[main] (2) [ right of=1] {$2$}; 
\node[main] (3) [right of=2] {$3$}; 
\node[main] (4) [below  of=2] {$4$}; 
\draw[-](1) -- (2); 
\draw[-] (2) -- (3); 
\draw[-] (3) -- (4); 
\draw[-] (4) -- (1);
\end{tikzpicture}$$  
  
For the matrix $A_2$, $r=2~s_{2}\neq 0$, $a_{24}\neq 0$ and $a_{42}=0$. Even though $H=\{\{1,4\},\{3,4\}\}$ is the same as earlier, $E(G)=E(\Gamma)\cup\{\{1,4\},\{3,4\}\}.$ Therefore, we have the following graph: 
 $$\begin{tikzpicture}[node distance={15mm}, thick, main/.style = {draw, circle}] 
\node[main] (1) {$1$}; 
\node[main] (2) [ right of=1] {$2$}; 
\node[main] (3) [right of=2] {$3$}; 
\node[main] (4) [below  of=2] {$4$}; 
\draw[-] (1) -- (2); 
\draw[-] (2) -- (3);
\draw[-] (2) -- (4);
\draw[-] (3) -- (4); 
\draw[-] (4) -- (1);
\end{tikzpicture}$$
confirming that $G=G(A_2^{\dag}),$ since $A_2^\dag$ is given by
\begin{center}
$A_2^\dag=\frac{1}{12}\begin{pmatrix}
~~12 & ~~0 & ~~0 & 0 \\
-12 & ~~5 & ~~1 & 0 \\
~~0 & -2 & ~~2 & 0 \\
~~12 & -5 & -1 & 0
\end{pmatrix}.$  
\end{center}
\end{ex}

Theorem \ref{AdagG} is motivated by the following result, which in turn, is obtained as a consequence.

\begin{thm}\label{mppenta}\cite[Theorem 3.2]{KS24}
  Let $A=(a_{ij})\in M_{n+1}(\mathbb{R})$ be a singular matrix with the triangle property such that the leading principal submatrix of $A$ has rank $n$. Then $A^{\dag}$ is a pentadiagonal matrix.   \end{thm}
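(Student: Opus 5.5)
The plan is to derive Theorem \ref{mppenta} as a special case of Theorem \ref{AdagG} with $\Gamma=P_{n+1}$, the path $v_1\sim v_2\sim\cdots\sim v_{n+1}$. Here $v_{n+1}$ is pendant and its unique neighbour is $v_r=v_n$.

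First I check the hypotheses. The triangle property requires $a_{ii}\neq0$ for $2\le i\le n$, which are exactly the interior vertices of $P_{n+1}$. A matrix with the triangle property also has the $P_{n+1}$-treeangle property, as noted in the introduction. The leading principal submatrix $B$ of order $n$ has rank $n$, so it is invertible, and $A$ is singular by assumption.

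One caveat concerns edges $\{v_i,v_{i+1}\}$ with $a_{i(i+1)}=a_{(i+1)i}=0$. For these I would first invoke Lemma \ref{nztreeangle} and Remark \ref{rnztreeangle} to split $A$ into a direct sum of blocks, each with the triangle property on a subpath. The Moore-Penrose inverse is then block diagonal. Invertible blocks have tridiagonal inverses by Theorem \ref{klein1}, and the single block containing $v_{n+1}$ (singular, with invertible leading submatrix) is handled by the main argument. A block diagonal matrix with tridiagonal or pentadiagonal blocks is pentadiagonal. Alternatively, Theorem \ref{AdagG} does not seem to need irreducibility, so this reduction may be unnecessary. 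Small cases $n<3$ are trivial: any matrix of order at most $3$ is pentadiagonal.

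Next I apply Theorem \ref{AdagG}. In every case (a), (b), (c), $H\subseteq N'_r\cup N''_r$, and these sets consist of edges $\{v_j,v_{n+1}\}$ with $v_j\in N(v_n)\setminus\{v_{n+1}\}=\{v_{n-1}\}$. So $H\subseteq\{\{v_{n-1},v_{n+1}\}\}$. Whether $s_r$ vanishes or not,
$$E(G(A^\dag))\subseteq E(P_{n+1})\cup\{\{v_{n-1},v_{n+1}\}\}.$$
Hence $c_{ij}=0$ whenever $|i-j|\ge 3$, and also whenever $|i-j|=2$ unless $\{i,j\}=\{n-1,n+1\}$. In particular $A^\dag$ is pentadiagonal, and in fact it is tridiagonal apart from the two corner entries $c_{(n-1)(n+1)}$ and $c_{(n+1)(n-1)}$.

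The main obstacle is justifying that Theorem \ref{AdagG} applies when some edge of the path has both off-diagonal entries zero, since its proof uses Theorem \ref{klein1}'s treediagonality of $B^{-1}$. That step only needs $B$ invertible with the treeangle property, and a reducible $B$ still has a treediagonal inverse in the weak sense that zeros are allowed on edges. So the edge-set inclusion above survives, and it is all that pentadiagonality requires.
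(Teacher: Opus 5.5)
Your proposal is correct and follows the paper's proof: specialize Theorem \ref{AdagG} to $\Gamma=P_{n+1}$ with $r=n$, observe that $N(v_n)\setminus\{v_{n+1}\}=\{v_{n-1}\}$ forces $H\subseteq\{\{v_{n-1},v_{n+1}\}\}$, and read off pentadiagonality. The differences are minor. The paper also computes $s_n=a_{(n-1)(n-1)}/d_{n(n-1)}\neq 0$ via Remarks \ref{psub} and \ref{inv}, which, as you note, is unnecessary for pentadiagonality. You, in turn, take care of the reducible and small-order cases that the paper passes over.
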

\begin{proof}
    Let $A\in M_{n+1}(\mathbb{R})$ be a singular matrix with the triangle property. Then, by definition, $a_{22}, \ldots , a_{(n-1)(n-1)}, a_{nn}$ are nonzero. Observe that $A$ may be viewed as having the $P_{n+1}$-treeangle property and so the leading principal submatrix of order $n$, of $A^\dag$, is tridiagonal. Further, by Remark \ref{psub}, $B$ is an invertible matrix that possesses the triangle property, so that, by Remark \ref{inv}, $d_{n(n-1)} \neq 0.$ Let $s_n$ be defined as in Theorem \ref{AdagG}. Then 
 \begin{center}
  $s_n=\frac{1}{a_{nn}}\big(1+\frac{a_{(n-1)n}a_{n(n-1)}}{d_{n(n-1)}}\big)=\frac{a_{(n-1)(n-1)}}{d_{n(n-1)}}\neq 0$.    
 \end{center}
 By Theorem \ref{AdagG}, $E(G)=E(P_{n+1})\cup H,$ where $G:=G(A^{\dag}).$  Since $N(v_n)=\{v_{n-1},v_{n+1}\},$ in all the three cases $(a),(b),(c)$ of Theorem \ref{AdagG}, we obtain $H \subseteq\{\{v_{n-1},v_{n+1}\}\}.$ Hence, the first $n-2$ entries of the last column of $A^{\dag}$, as well as the first $n-2$ entries of its last row, are zero. Thus, $A^\dag$ is a pentadiagonal matrix. \end{proof}

\begin{rem}
It may happen that $A^{\dag}$ is even tridiagonal (so that it is trivially pentadiagonal), as the following example shows. From the proof of the result above, it is clear that this happens precisely when $H$ is empty.
Let $$A=\begin{pmatrix}
    -1 & 2 & ~~2 & -4 & -4\\
    ~~1 & 1 & ~~1 & -2 & -2\\
    ~~1 & 1 & -1 & ~~2 & ~~2\\
    ~~0 & 0 & ~~0 & ~~1 & ~~1\\
    ~~0 & 0 & ~~0 & ~~0 & ~~0
\end{pmatrix}.$$ Then $A$ is a singular matrix with the triangle property, and $$A^\dag=\frac{1}{6}\begin{pmatrix}
    -2 & ~~4 & ~~0 & 0 & 0\\
    ~~2 & -1 & ~~3 & 0 & 0\\
    ~~0 & ~~3 & -3 & 12 & 0\\
    ~~0 & ~~0 & ~~0 & 3 & 0\\
    ~~0 & ~~0 & ~~0 & 3 & 0
\end{pmatrix},$$ 
is a tridiagonal matrix. Here, $s_4=\frac{1}{2}$, $a_{54}=0,a_{45}=1\neq 0.$ Then $$H=N_r''=\{\{v_j,5\}:v_j\in N(4)\backslash{\{5\}}, ~a_{4j}\neq 0\}=\phi.$$
\end{rem}

 For a finite set $X$, we let $|X|$ denote its cardinality. In the next result, we identify precisely when $G(A^\dag)$ is a tree, thereby answering the second question in the Introduction.

 \begin{cor}\label{mptree}
  Let $\Gamma$ be a tree and $A=(a_{ij})\in M_{n+1}(\mathbb{R}),n\ge 3$ be a singular matrix satisfying the $\Gamma$-treeangle property for a tree $\Gamma.$ Suppose that $a_{ii}\neq 0$ for all interior vertices $v_i$. Assume that $B\in M_n(\mathbb{R}),$ the leading principal submatrix of $A,$ is invertible. Let $v_{r}$ be the unique vertex incident with $v_{n+1}$. If $G:=G(A^\dag),$ then $G$ is a tree iff any one of the following holds: \begin{enumerate}
      \item If $s_r\neq 0,$ then $H$ is empty.
\item If $s_r=0,$ then $H$ is singleton. 
  \end{enumerate}
Here, $H$ is as defined in Theorem \ref{AdagG}.

  \end{cor}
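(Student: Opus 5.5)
The plan is to count edges and check connectivity, using the description of $E(G)$ in Theorem \ref{AdagG}. A graph on $n+1$ vertices is a tree iff it is connected and has exactly $n$ edges. Two facts from Theorem \ref{AdagG} and its proof will be used throughout. First, $G\setminus\{v_{n+1}\}$ coincides with the tree $\Gamma\setminus\{v_{n+1}\}$, because $D_2^{-1}B^{-1}D_1^{-1}$ is $\Gamma\setminus\{v_{n+1}\}$-treediagonal. Second, every edge of $H$ has the form $\{v_j,v_{n+1}\}$ with $v_j\in N(v_r)\setminus\{v_{n+1}\}$. Such an edge is distinct from $\{v_r,v_{n+1}\}$ and does not belong to $E(\Gamma)$, since $v_{n+1}$ is pendant in $\Gamma$. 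So $H\cap E(\Gamma)=\emptyset$, and the edges of $G$ can be counted exactly.

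\emph{Case $s_r\neq 0$.} By part (i) of Theorem \ref{AdagG}, $E(G)=E(\Gamma)\cup H$ is a disjoint union, so $|E(G)|=n+|H|$. $G$ contains the spanning tree $\Gamma$, hence is connected. Therefore $G$ is a tree iff $|E(G)|=n$, i.e.\ iff $H=\emptyset$. In the other direction, any edge of $H$ closes the cycle $v_j,v_r,v_{n+1}$, which confirms the claim directly.

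\emph{Case $s_r=0$.} By part (ii), $E(G)=(E(\Gamma)\setminus\{\{v_r,v_{n+1}\}\})\cup H$, so $|E(G)|=n-1+|H|$. The subgraph on $V\setminus\{v_{n+1}\}$ is the tree $\Gamma\setminus\{v_{n+1}\}$. The vertex $v_{n+1}$ is joined to it exactly by the edges of $H$.
\begin{itemize}
\item If $H=\emptyset$, then $v_{n+1}$ is isolated and $G$ is disconnected.
\item If $|H|\ge 2$, then $|E(G)|>n$ and $G$ is not a tree.
\item If $|H|=1$, then $G$ is obtained from a tree on $n$ vertices by attaching the new vertex $v_{n+1}$ with a single edge, so $G$ is a tree.
\end{itemize}
Hence in this case $G$ is a tree iff $H$ is a singleton.

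Since $s_r$ is either zero or nonzero, these two cases together give the stated equivalence: $G$ is a tree iff the condition corresponding to the actual value of $s_r$ holds. The only point needing care, and the one I expect to be the main (mild) obstacle, is justifying that no edge between two vertices of $V\setminus\{v_{n+1}\}$ is gained or lost in passing from $\Gamma$ to $G$. For this I would cite the treediagonality of $D_2^{-1}B^{-1}D_1^{-1}$ from the proof of Theorem \ref{AdagG}, together with Remark \ref{diagtree} and Theorem \ref{klein1} applied to $B$. I would also note that the disjointness of $H$ from $E(\Gamma)$ makes the edge counts exact.
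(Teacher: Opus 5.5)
Your proof is correct, and most of it coincides with the paper's. The forward direction uses the same disjointness $H\cap E(\Gamma)=\emptyset$ and the same counts $|E(G)|=n+|H|$ or $n-1+|H|$ from Theorem \ref{AdagG}. The converse for $s_r\neq 0$ ($H=\emptyset$ gives $G=\Gamma$) is also argued exactly as in the paper.

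You diverge only on connectivity when $s_r=0$ and $|H|=1$. You argue directly: $G$ restricted to $v_1,\dots,v_n$ is the tree $\Gamma\setminus\{v_{n+1}\}$, and $v_{n+1}$ hangs off it by the single edge of $H$. The first fact, which you flagged as the main obstacle, is already contained in the statement of Theorem \ref{AdagG}, since every edge of $H$ contains $v_{n+1}$; no separate appeal to Theorem \ref{klein1} is needed.

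The paper argues by contradiction instead. A disconnected $G$ would force $v_{n+1}$ to be isolated, hence $A^\dagger=\mathrm{diag}(D_2^{-1}B^{-1}D_1^{-1},0)$, hence $A=\mathrm{diag}(D_1BD_2,0)$. Then $a_{r(n+1)}=a_{(n+1)r}=0$, contradicting Remark \ref{rnztreeangle}.

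Your route is shorter and uses the hypothesis $|H|=1$ exactly where it matters. The paper's route never uses $|H|=1$, and in fact proves more: $v_{n+1}$ is never isolated in $G$, so $s_r=0$ forces $H\neq\emptyset$. Consequently your bullet for $H=\emptyset$ in the case $s_r=0$ is vacuous, which is harmless since its conclusion is correct anyway.
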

  \begin{proof}
    Assume that $G$ is a tree. Let $s_{r}\neq0$, so that, $E(G)=E(\Gamma)\cup H$, by Theorem \ref{AdagG}. Since $E(\Gamma)\cap H=\phi$, we then have $|E(G)|=|E(\Gamma)|+|H|.$
   Since $G,\Gamma$ are trees on $n+1$ vertices, $|E(G)|=|E(\Gamma)|=n$ and so $H$ is empty.\\
   If, on the other hand, $s_{r}=0$ then, again by Theorem \ref{AdagG}, $E(G)=\{E(\Gamma)\backslash{\{v_r,v_{n+1}\}}\}\cup H$.
    As before, $|E(G)|=|E(\Gamma)|-1+|H|$, since  $\{E(\Gamma)\backslash{\{v_r,v_{n+1}\}}\}\cap H=\phi$. As $G,\Gamma$ are trees on $n+1$ vertices, we have that $H$ is a singleton. 
      
Conversely, suppose that 1 holds. By $(2.i)$ of Theorem \ref{AdagG}, $E(G)=E(\Gamma)$ and so one has $G=\Gamma$. If, on the other hand, 2 holds, then using $(2.ii)$ of Theorem \ref{AdagG}, we obtain $|E(G)|=n$. Now, if $G$ is connected, then we can conclude that $G$ is a tree. Suppose that $G$ is not connected. Then $A^\dag$ is a block diagonal matrix. As was argued in the proof of Theorem \ref{AdagG}, after the expression for $A^{\dag}$ in (\ref{Adagfor}),  the matrix $A^\dag[n+1]$ is $\Gamma\setminus \{v_{n+1}\}$ -treediagonal. Thus $A^\dag[n+1]$ is not a block diagonal matrix. This means that$$A^\dag=\begin{pmatrix}
    D_2^{-1}B^{-1}D_1^{-1} & 0\\
    0 & \lambda
\end{pmatrix}.$$ 
However, since $A^\dag$ is singular, $\lambda=0$ and so 
$$A=\begin{pmatrix}
    D_1BD_2 & 0 \\
    0 & 0
    \end{pmatrix}.$$ 
Thus, $a_{r(n+1)}=a_{(n+1)r}=0$, However, by Remark \ref{rnztreeangle}, this is impossible because $v_r$ is adjacent to $v_{n+1}.$
\end{proof}

\begin{ex} The matrix   
$A=\begin{pmatrix}
        0 & 4 & 4 & -4 & 0\\
        1 & 2 & 2 & -2 & 0\\
        0 & 0 & 2 & ~~0 & 0\\
        0 & 0 & 0 & ~~2 & 0\\
        1 & 2 & 2 & -2 & 0
\end{pmatrix}$ possesses the $\Gamma$-treeangle property, where
 $\Gamma$ is:
$$\begin{tikzpicture}[node distance={15mm}, thick, main/.style = {draw, circle}] 
\node[main] (1) {$1$}; 
\node[main] (2) [right of=1] {$2$}; 
\node[main] (3) [below of=1] {$3$}; 
\node[main] (4) [below of=2] {$4$};
\node[main] (5) [right of=2] {$5$};
\draw[-] (1) -- (2); 
\draw[-] (2) -- (4); 
\draw[-] (2) -- (3); 
\draw[-] (5) -- (2); 
\end{tikzpicture} $$ 
Thus, $r=2, ~d_{12}=-4,~d_{23}=4,~d_{24}=4$ and so $$s_2=\frac{1}{a_{22}}\big(1+\frac{a_{21}a_{12}}{d_{12}}+\frac{a_{23}a_{32}}{d_{23}}+\frac{a_{24}a_{42}}{d_{24}}\big)=0.$$Here, $$H=\{\{v_j,5\}:v_j\in N(2)\backslash{\{5\}},~a_{j2}\neq 0\}=\{\{1,5\}\}$$ and so item 2 in the above corollary is satisfied. Note that, if $\Gamma_1$ is the tree given by
$$\begin{tikzpicture}[node distance={15mm}, thick, main/.style = {draw, circle}] 
\node[main] (1) {$1$}; 
\node[main] (2) [right of=1] {$2$}; 
\node[main] (3) [right of=2] {$3$}; 
\node[main] (4) [below of=2] {$4$};
\node[main] (5) [below of=1] {$5$};
\draw[-] (1) -- (2); 
\draw[-] (2) -- (4); 
\draw[-] (2) -- (3); 
\draw[-] (5) -- (1); 
\end{tikzpicture} $$ 
then $$A^\dag=\frac{1}{4}\begin{pmatrix}
        -2 & 2 & ~~0 & 0 & 2\\
        ~~1 & 0 & -2 & 2 & 0\\
        ~~0 & 0 & ~~2 & 0 & 0\\
        ~~0 & 0 & ~~0 & 2 & 0\\
        ~~0 & 0 & ~~0 & 0 & 0
\end{pmatrix}$$ may be verified to be $\Gamma_1$-treediagonal. This confirms the conclusion of the result. 
\end{ex}

To conclude, we answer the third question of the Introduction, by characterizing when the graph of the Moore-Penrose inverse of $A$ has the treediagonal property relative to the tree, with respect to which $A$ has the treeangle property.
 
 \begin{thm}\label{dag1}
 Let $\Gamma$ be a tree and $A\in M_{n+1}(\mathbb{R}),n\ge 3$ be a singular matrix possessing the $\Gamma$-treeangle property. Assume further, that $a_{ii}\neq 0$ for interior vertices $v_i$. Assume that $B\in M_n(\mathbb{R}),$ the leading principal submatrix of $A,$ is invertible. Let $v_{n+1}$ be a pendant vertex of $\Gamma$ and denote by $v_r,$ the unique vertex adjacent to $v_{n+1}.$ Then $A^{\dag}$ is $\Gamma$-treediagonal if and only if $A$ or its transpose is of the form:
    $$\begin{pmatrix}
        B & 0 \\
        \zeta ({\bf e}^r)^TB & 0
    \end{pmatrix},$$ for some $\zeta\neq 0$, with the $r$th column of $B$ being a nonzero multiple of ${\bf e}^r.$
\end{thm}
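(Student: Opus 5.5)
The plan is to read off both directions from Theorem \ref{AdagG}, using the following reformulation. Saying that $A^\dag$ is $\Gamma$-treediagonal means exactly that $G(A^\dag)=\Gamma$: every non-edge of $\Gamma$ gives a zero pair of entries, and every edge of $\Gamma$ has at least one nonzero entry. Throughout we work, as in Remark \ref{rnztreeangle}, with $A$ such that for every edge $\{v_i,v_j\}\in E(\Gamma)$ at least one of $a_{ij},a_{ji}$ is nonzero. Since $\Gamma$ has $n+1\ge 4$ vertices and $v_{n+1}$ is pendant, $v_r$ is interior, so $a_{rr}\neq 0$. Hence exactly one of the cases $(a),(b),(c)$ of Theorem \ref{AdagG} occurs.

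\emph{Necessity.} Suppose $G(A^\dag)=\Gamma$. By Theorem \ref{AdagG}(2), the edge $\{v_r,v_{n+1}\}$ belongs to $E(G)$ only if $s_r\neq 0$, so $s_r\neq 0$ and $E(G)=E(\Gamma)\cup H$. Since $H\cap E(\Gamma)=\emptyset$, we get $H=\emptyset$.

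\begin{itemize}
\item Case $(c)$ cannot occur. There $N'_r\cup N''_r=\emptyset$, so $a_{jr}=a_{rj}=0$ for every $v_j\in N(v_r)\setminus\{v_{n+1}\}$. This set is nonempty because $v_r$ is interior, so some edge $\{v_r,v_j\}$ of $\Gamma$ has both entries zero, contradicting the standing assumption from Remark \ref{rnztreeangle}.
\item Case $(b)$: here $a_{(n+1)r}\neq0$ and $a_{r(n+1)}=0$, and $N'_r=\emptyset$ means $a_{jr}=0$ for all neighbours $v_j\neq v_{n+1}$ of $v_r$.
\begin{itemize}
\item The $r$th column of $B$ is $a_{rr}{\bf e}^r$. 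For any $v_i\neq v_r$ with $i\le n$, let $v_k$ be the vertex on the path from $v_i$ to $v_r$ adjacent to $v_r$. If $v_k=v_i$, then $a_{ir}=0$ directly. Otherwise the treeangle property at $v_k$ gives $a_{ir}a_{kk}=a_{ik}a_{kr}=0$; here $v_k$ is interior, so $a_{kk}\neq 0$ and $a_{ir}=0$.
\item By (\ref{eqc}), $c=0$. By (\ref{schureq1}), $a_{(n+1)(n+1)}=0$.
\item By (\ref{eqd}), $d^T=\zeta({\bf e}^r)^TB$ with $\zeta=a_{(n+1)r}/a_{rr}\neq0$.
\end{itemize}
This is precisely the displayed form.
\item Case $(a)$ is case $(b)$ applied to $A^T$. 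Both the treeangle property and the conclusion ``$G(A^\dag)=\Gamma$'' are invariant under transposition, since $(A^T)^\dag=(A^\dag)^T$. So in case $(a)$, $A^T$ has the displayed form.
\end{itemize}

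\emph{Sufficiency.} Assume $A$ has the displayed form; the transposed form is handled by the same symmetry. Then:
\begin{itemize}
\item $a_{r(n+1)}=0$, and $a_{(n+1)r}=\zeta({\bf e}^r)^TB{\bf e}^r=\zeta a_{rr}$.
\item The $r$th column of $B$ is a nonzero multiple of ${\bf e}^r$, so its multiple is $a_{rr}\neq 0$. Hence $a_{(n+1)r}\neq0$ and we are in case $(b)$.
\item $a_{jr}=0$ for all $j\neq r$, so $N'_r=\emptyset$, i.e.\ $H=\emptyset$.
\item Every product $a_{rk}a_{kr}$ in $s_r$ vanishes, so $s_r=1/a_{rr}\neq 0$.
\end{itemize}
Theorem \ref{AdagG}(2)(i) then yields $E(G)=E(\Gamma)$, i.e.\ $A^\dag$ is $\Gamma$-treediagonal.

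The main subtlety is exploiting the standing normalization from Lemma \ref{nztreeangle} and Remark \ref{rnztreeangle} to exclude case $(c)$. A second point is passing from ``$a_{jr}=0$ for neighbours $v_j$ of $v_r$'' to ``the whole $r$th column of $B$ is a multiple of ${\bf e}^r$''; this uses the hypothesis $a_{kk}\neq0$ at interior vertices.
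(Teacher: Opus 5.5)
Your proof is correct, but it is organised differently from the paper's.

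\textbf{The paper's route.} It works mainly with the explicit block formula (\ref{Adagfor}). If $A^\dag$ is $\Gamma$-treediagonal, the last column (or last row) off the corner must be a multiple of $\mathbf{e}^r$, as in (\ref{zeta11}) or (\ref{2zeta11}). Since $D_1,D_2$ are diagonal, this gives $B^{-1}\mathbf{e}^r=\delta_1\mathbf{e}^r$, and hence $B\mathbf{e}^r=\delta_1^{-1}\mathbf{e}^r$ by invertibility alone. Theorem \ref{AdagG}(c) together with Remark \ref{rnztreeangle} is used only to rule out $a_{r(n+1)}a_{(n+1)r}\neq 0$; your exclusion of case (c) is the same argument. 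The converse is checked by substituting $a_{r(n+1)}=0$ into (\ref{Adagfor}).

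\textbf{Your route.} You stay entirely at the level of the edge description in Theorem \ref{AdagG}. You first force $s_r\neq 0$ and $H=\emptyset$. You then upgrade $N'_r=\emptyset$, which only gives zeros at the neighbours of $v_r$, to $B\mathbf{e}^r=a_{rr}\mathbf{e}^r$. This uses propagation along paths with the treeangle identity and $a_{kk}\neq 0$ at interior vertices. For the converse you check $H=\emptyset$ and $s_r=1/a_{rr}\neq 0$.

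\textbf{What each buys.} The paper gets the column structure of $B$ for free from invertibility, and its converse needs neither $s_r$ nor $H$. Yours is more systematic: it treats the three sign patterns exhaustively and makes explicit that $s_r\neq 0$ is forced. It also avoids the paper's loose ``either (\ref{zeta11}) or (\ref{2zeta11})'' step, with its unexamined subcase $\zeta_1=0$. The price is that you depend on the exact pairing of sign patterns with $N'_r$ versus $N''_r$ in the statement of Theorem \ref{AdagG}. You use that pairing correctly. The text of that theorem's proof swaps the labels (a) and (b), but the statement is the right one, as a direct reading of (\ref{Adagfor}) and Example \ref{ex122} confirms.
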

  \begin{proof} Assume that $A^\dag$ is $\Gamma$-treediagonal. 
Since $A^\dag$ is $\Gamma$-treediagonal, using (\ref{Adagfor}), we can conclude that either \begin{eqnarray}\label{zeta11}
    \frac{a_{(n+1)r}}{a_{rr}}D_2^{-1}B^{-1}D_1^{-1}{\bf e}^r=\zeta_1 {\bf e}^r
\end{eqnarray}
       or
\begin{eqnarray}\label{2zeta11}
    \frac{a_{r(n+1)}}{a_{rr}}({\bf e}^r)^TD_2^{-1}B^{-1}D_1^{-1}=\zeta_2 ({\bf e}^r)^T
\end{eqnarray}
for $\zeta_1,\zeta_2\in \mathbb{R}.$
Suppose that $\zeta_1\neq 0.$ Then $a_{(n+1)r}\neq 0.$ We consider $(\ref{zeta11})$. Since $D_1,D_2$ are invertible diagonal matrices, we obtain $B^{-1}{\bf e}^r=\delta_1 {\bf e}^r$ for some $\delta_1\neq 0$. Thus, \begin{eqnarray}\label{beteq1}
       B{\bf e}^r=\frac{1}{\delta_1}{\bf e}^r.
   \end{eqnarray}
By (\ref{eqd}), we obtain \begin{eqnarray*}
       d^T&=&\frac{a_{(n+1)r}}{a_{rr}}({\bf e}^r)^TB\nonumber
   \end{eqnarray*} 
   {\bf Claim}: $a_{r(n+1)}=0.$\\
   Suppose not. Then, both $a_{r(n+1)},a_{(n+1)r}$ are nonzero. Note that, by the fact that $A^{\dag}$ is $\Gamma$-treediagonal, $G=\Gamma.$ By part (c) of Theorem \ref{AdagG}, we have $H=N'_r\cup N''_r.$ Note that  $H$ is the set of all edges incident with the vertex $v_{n+1}$ in $G$. Since $n\ge 3$, the vertex $v_r$ has atleast one neighbor, say $v_{\ell}.$ By Remark \ref{rnztreeangle}, atleast one of $a_{\ell r}$ or $a_{r\ell}$ is nonzero. Therefore, by the definitions of $N_r'$ and $N_r''$, it follows that the vertex $v_{\ell}$ is also adjacent to $v_{n+1}$, which is a contradiction to the fact $v_{n+1}$ is pendant vertex of $G.$

   Thus, $$A=\begin{pmatrix}
        B & 0 \\
        \zeta ({\bf e}^r)^TB & 0
    \end{pmatrix}$$ and satisfies (\ref{beteq1}).\\ Suppose (\ref{2zeta11})  holds. Since the treeangle property is invariant under the operation of matrix transposition, $A^T$ takes the prescribed form. \\
   Conversely, suppose that $A=\begin{pmatrix}
        B & 0 \\
        \zeta ({\bf e}^r)^TB & 0
    \end{pmatrix}$ for $\zeta \neq 0$,~ $B{\bf e}^r=\beta {\bf e}^r$ for $\beta\neq 0.$ If $D_1,D_2$ are any invertible diagonal matrices, we then have $$\frac{a_{(n+1)r}}{a_{rr}}D_2^{-1}B^{-1}D_1^{-1}{\bf e}^r=\delta {\bf e}^r.$$  Substituting $a_{r(n+1)}=0$ in (\ref{Adagfor}), we infer that $$A^\dag=\begin{pmatrix}
        D_2^{-1}B^{-1}D_1^{-1} & \delta {\bf e}^r\\
        0 & 0
    \end{pmatrix},$$ which is $\Gamma$-treediagonal. \\
    The converse of the second part follows similarly. 
\end{proof}

\begin{ex}
   Let $\Gamma$ be the tree given by
$$\begin{tikzpicture}[node distance={15mm}, thick, main/.style = {draw, circle}] 
\node[main] (1) {$1$}; 
\node[main] (2) [ right of=1] {$2$}; 
\node[main] (3) [right of=2] {$3$}; 
\node[main] (4) [below of=3] {$4$}; 
\node[main] (5) [right of=3] {$5$}; 
\draw[-](1) -- (2); 
\draw[-] (2) -- (3); 
\draw[-] (3) -- (4); 
\draw[-] (3) -- (5);
\end{tikzpicture}$$
  Here,  $A=\begin{pmatrix}
        1 & -2 & -6 & 0 & -12\\
        1 & ~~1 & ~~3 & 0 & ~~6\\
        0 & ~~0 & ~~1 & 0 & ~~2\\
        0 & ~~0 & -1 & 1 & -2\\
        0 & ~~0 & ~~0 & 0 & ~~0
    \end{pmatrix}$ and $A^\dag=\frac{1}{15}\begin{pmatrix}
      ~~5 & 10 & ~~0 & 0 & 0\\
      -5 & 5 & -45 & 0 & 0\\
      ~~0 & 0 & ~~3 & 0 & 0\\
      ~~0 & 0 & ~~15 & 15 & 0\\
      ~~0 & 0 & ~~6 & 0 & 0
    \end{pmatrix}.$ It may be verified that $A$ satisfies the $\Gamma$-treeangle property, while $A^{\dag}$ is a $\Gamma$-treediagonal matrix. 
\end{ex}

\subsection*{Acknowledgements}
The authors thank the reviewer for his/her diligence. The comments and suggestions have helped in presenting the results in a more lucid manner. 
Andr\'es M. Encinas  has been partially supported by the Spanish
Research Council under project 
PID2021-122501NB-I00 and by the Universitat Polit\`ecnica de Catalunya under funds AGRUPS-UPC  2025.

\end{document}